\definecolor{darkred}{RGB}{139,0,0}
\definecolor{darkgreen}{RGB}{0,100,0}
\definecolor{darkmagenta}{RGB}{139,0,139}
\definecolor{darkpurple}{RGB}{110,0,180}
\definecolor{darkblue}{RGB}{40,0,200}
\definecolor{darkorange}{RGB}{255,140,0}
\newcommand{\bsx}{\boldsymbol{x}}
\newcommand{\bszero}{\boldsymbol{0}}
\newcommand{\bsdelta}{\boldsymbol{\delta}}
\newcommand{\bsk}{\boldsymbol{k}}
\newcommand{\bsy}{\boldsymbol{y}}
\newcommand{\bsw}{\boldsymbol{w}}
\newcommand{\rd}{\,{\rm d}}
\newcommand{\NN}{\mathbb{N}}
\newcommand{\ZZ}{\mathbb{Z}}
\newcommand{\EE}{\mathbb{E}}
\newcommand{\cP}{\mathcal{P}}
\newcommand{\cQ}{\mathcal{Q}}
\newcommand{\cR}{\mathcal{R}}
\newcommand{\icomp}{\mathtt{i}}
\newtheorem{theorem}{Theorem}
\newtheorem{proposition}{Proposition}
\newtheorem{remark}{Remark}
\newtheorem{lemma}{Lemma}
\begin{document}

\title{A note on the periodic $L_2$-discrepancy of Korobov's $p$-sets}

\author{Josef Dick\thanks{This project started while Josef Dick was ``Land Ober\"osterreich Guest Professor'' within the Special Research Program ``Quasi-Monte Carlo Methods: Theory and Applications'' which is funded by the Austrian Science Fund (FWF) Project F55-N26, and the gouverment of the Austrian state Upper Austria. It was completed while he visited JKU during a further research stay. },
Aicke Hinrichs\thanks{A. Hinrichs is supported by the Austrian Science Fund (FWF) Project F5513-N26, which is a part of the Special Research Program ``Quasi-Monte Carlo Methods: Theory and Applications''.
}\,
and 
Friedrich Pillichshammer\thanks{F. Pillichshammer is supported by the Austrian Science Fund (FWF) Project F5509-N26, which is a part of the Special Research Program ``Quasi-Monte Carlo Methods: Theory and Applications''.}
}

\date{}

\maketitle

\begin{abstract}
We study the periodic $L_2$-discrepancy of point sets in the $d$-dimensional torus. This discrepancy is intimately connected with the root-mean-square $L_2$-discrepancy of shifted point sets, with the notion of diaphony, and with the worst case error of cubature formulas for the integration of periodic functions in Sobolev spaces of mixed smoothness. 

In discrepancy theory many results are based on averaging arguments. In order to make such results relevant for applications one requires explicit constructions of point sets with ``average'' discrepancy. In our main result we study Korobov's $p$-sets and show that this point sets have periodic $L_2$-discrepancy of average order. This result is related to an open question of Novak and Wo\'{z}niakowski. 
\end{abstract}

\section{Introduction}

In this note we study the periodic $L_2$-discrepancy which is a quantitative measure for the irregularity of distribution of a point set, but which is also closely related to the worst-case integration error of quasi-Monte Carlo integration rules (see, for example, \cite{hin,HOe,HW,Lev}). In order to state its definition we first explain the ``test-sets'' that are considered in this specific notion of discrepancy.

For $x,y \in [0,1)$ we define the periodic ``interval'' $I(x,y)$ as $$I(x,y)=\left\{ 
\begin{array}{ll}
[x,y) & \mbox{if $x \le y$},\\
\left[0,y\right) \cup [x,1) &  \mbox{if $x > y$.}
\end{array}  \right.
$$
For dimension $d >1$ and $\bsx=(x_1,\ldots,x_d)$ and $\bsy=(y_1,\ldots,y_d)$ in $[0,1)^d$ the periodic ``boxes'' $B(\bsx,\bsy)$ are given by $$B(\bsx,\bsy)=I(x_1,y_1)\times \ldots \times I(x_d,y_d).$$

The local discrepancy of a point set $\cP=\{\bsx_1,\bsx_2,\ldots,\bsx_{N}\}$ consisting of $N$ elements in the $d$-dimensional unit cube with respect to a periodic box $B=B(\bsx,\bsy)$ is given by $$\Delta_{\cP}(B)=\frac{\#\{j \in \{1,\ldots,N\}\ : \ \bsx_j \in B\}}{N}-{\rm volume}(B).$$ 
Then the {\it periodic $L_2$-discrepancy} of $\cP$ is the $L_2$-norm of the local discrepancy taken over all periodic boxes $B=B(\bsx,\bsy)$, i.e., $$L_{2,N}^{{\rm per}}(\cP)=\left(\int_{[0,1]^d} \int_{[0,1]^d} \Delta_{\cP}(B(\bsx,\bsy))^2 \rd \bsx \rd \bsy\right)^{1/2}.$$

The usual $L_2$-discrepancy of a point set $\cP$ is defined as $$L_{2,N}(\cP)=\left(\int_{[0,1]^d} \Delta_{\cP}(B(\bszero,\bsy))^2 \rd \bsy\right)^{1/2}.$$ 

In the closely connected context of worst case errors for cubature formulas it is natural to extend these notions to weighted point sets. If, additionally to the point set $\cP=\{\bsx_1,\bsx_2,\ldots,\bsx_{N}\}$ we also have a set of associated real weights  $\bsw=\{w_1,w_2,\ldots,w_{N}\}$, the local discrepancy of the weighted point set $\cP$ is given as 
$$\Delta_{\cP}(B,\bsw)=
\left(\sum_{j: x_j\in B} w_j\right) -{\rm volume}(B).$$
Then $\Delta_{\cP}(B) = \Delta_{\cP}(B,\bsw)$ is obtained for equal weights $w_j=1/N$.
The {\it periodic $L_2$-discrepancy} of the weighted point set $\cP$ is 
$$L_{2,N}^{{\rm per}}(\cP,\bsw)=\left(\int_{[0,1]^d} \int_{[0,1]^d} \Delta_{\cP}(B(\bsx,\bsy),\bsw)^2 \rd \bsx \rd \bsy\right)^{1/2}.$$ 
The usual $L_2$-discrepancy of the weighted point set $\cP$ is 
$$L_{2,N}(\cP,\bsw)=\left(\int_{[0,1]^d} \Delta_{\cP}(B(\bszero,\bsy),\bsw)^2 \rd \bsy\right)^{1/2}.$$ 

For a (weighted) point set $\cP=\{\bsx_1,\bsx_2,\ldots,\bsx_{N}\}$ and a real vector $\bsdelta \in [0,1)^d$ the shifted point set $\cP+\bsdelta$ is defined as $$\cP+\bsdelta=\{\{\bsx_1+\bsdelta\},\{\bsx_2+\bsdelta\},\ldots,\{\bsx_{N}+\bsdelta\}\},$$ where $\{\bsx_j+\bsdelta\}$ means that the fractional-part-function $\{x\}=x-\lfloor x \rfloor$ for non-negative real numbers $x$, is applied component-wise to the vector $\bsx_j+\bsdelta$. The root-mean-square $L_2$-discrepancy of a shifted (and weighted) point set $\cP$ with respect to all uniformly distributed shift vectors $\bsdelta \in [0,1)^d$ is 
$$\sqrt{\EE_{\bsdelta}[(L_{2,N}(\cP+\bsdelta,\bsw))^2]}=\left(\int_{[0,1]^d} (L_{2,N}(\cP+\bsdelta,\bsw))^2 \rd \bsdelta \right)^{1/2}.$$ 
Note that $$\EE_{\bsdelta}[L_{2,N}(\cP+\bsdelta,\bsw)] \le \sqrt{\EE_{\bsdelta}[(L_{2,N}(\cP+\bsdelta,\bsw))^2]}.$$
If the weights are the standard equal weights $w_j=1/N$, we drop $\bsw$ from the notation.

The following relation between periodic $L_2$-discrepancy and root-mean-square $L_2$-discrepancy of a shifted point set $\cP$ holds (see also \cite{Lev}):

\begin{proposition}\label{pr1}
For $\cP=\{\bsx_1,\bsx_2,\ldots,\bsx_{N}\}$ in $[0,1)^d$ and weights $\bsw=\{w_1,w_2,\ldots,w_{N}\}$ we have $$L_{2,N}^{{\rm per}}(\cP,\bsw)= \sqrt{\EE_{\bsdelta}[(L_{2,N}(\cP+\bsdelta,\bsw))^2]}.$$ In particular, we have $$L_{2,N}^{{\rm per}}(\cP)= \sqrt{\EE_{\bsdelta}[(L_{2,N}(\cP+\bsdelta))^2]}.$$
\end{proposition}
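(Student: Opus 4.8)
The plan is to expand both sides as double integrals over the unit cube and reduce the claim to a pointwise identity for the local discrepancy of a shifted point set. Squaring the right-hand side and writing out the definition of $L_{2,N}(\cP+\bsdelta,\bsw)$ gives
$$\EE_{\bsdelta}[(L_{2,N}(\cP+\bsdelta,\bsw))^2]=\int_{[0,1]^d}\int_{[0,1]^d}\Delta_{\cP+\bsdelta}(B(\bszero,\bsy),\bsw)^2\rd\bsy\rd\bsdelta,$$
whereas the left-hand side squared is the double integral of $\Delta_{\cP}(B(\bsx,\bsy),\bsw)^2$ over $(\bsx,\bsy)$. Since $B(\bszero,\bsy)=[\bszero,\bsy)$ is the standard anchored box, everything comes down to understanding which original points $\bsx_j$ are carried into $[\bszero,\bsy)$ by the shift $\bsdelta$.

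The key step I would carry out is the one-dimensional identity
$$\{x\in[0,1):\{x+\delta\}<y\}=I(\{-\delta\},\{y-\delta\}),$$
proved by a short case distinction according to whether $y>\delta$ or $y<\delta$ (the former producing the wrap-around interval $[0,y-\delta)\cup[1-\delta,1)$, the latter a single interval). Taking the tensor product over the $d$ coordinates shows that $\{\bsx_j+\bsdelta\}\in B(\bszero,\bsy)$ if and only if $\bsx_j\in B(\{-\bsdelta\},\{\bsy-\bsdelta\})$. A quick length computation confirms that each periodic interval $I(\{-\delta_i\},\{y_i-\delta_i\})$ has length exactly $y_i$, so that ${\rm volume}(B(\{-\bsdelta\},\{\bsy-\bsdelta\}))=y_1\cdots y_d={\rm volume}(B(\bszero,\bsy))$. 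Combining the counting part and the volume part yields the pointwise identity
$$\Delta_{\cP+\bsdelta}(B(\bszero,\bsy),\bsw)=\Delta_{\cP}(B(\{-\bsdelta\},\{\bsy-\bsdelta\}),\bsw).$$

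Finally I would substitute this identity into the double integral and perform two measure-preserving changes of variables on the torus: for fixed $\bsdelta$, replacing $\bsy$ by $\bst=\{\bsy-\bsdelta\}$ leaves the inner integral unchanged, and then setting $\bsx=\{-\bsdelta\}$ turns the outer $\bsdelta$-integral into an integral over $\bsx\in[0,1)^d$. After these substitutions the right-hand side becomes exactly
$$\int_{[0,1]^d}\int_{[0,1]^d}\Delta_{\cP}(B(\bsx,\bst),\bsw)^2\rd\bst\rd\bsx=(L_{2,N}^{{\rm per}}(\cP,\bsw))^2,$$
which proves the first assertion; the unweighted statement follows by taking $w_j=1/N$. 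The main obstacle is the one-dimensional identity: one must handle the boundary behavior of the fractional part carefully and treat the cases $y>\delta$ and $y<\delta$ separately, since it is precisely the wrap-around that forces the periodic (rather than anchored) box structure and makes the whole correspondence work.
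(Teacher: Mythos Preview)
Your proposal is correct and follows essentially the same approach as the paper: both establish the one-dimensional identity $\{x:\{x+\delta\}\in[0,y)\}=I(1-\delta,\{y-\delta\})$ via the case split $y\gtrless\delta$, note that the volumes agree, and then perform the same measure-preserving changes of variables. The only difference is presentational: the paper writes out the case $d=1$ with equal weights and carries through the explicit substitutions $t=1-\delta$ and $z=y+t$ (resp.\ $z=y+t-1$), while you treat general $d$ and general weights directly via the tensor product and phrase the substitutions as translations on the torus.
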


\begin{proof}
We write down the proof for dimension $d=1$ and the equal weighted case only. The general case follows by similar arguments. We have
\begin{eqnarray*}
\EE_{\delta}[(L_{2,N}(\cP+\delta))^2] & = & \int_0^1 \int_0^1 \Delta_{\cP+\delta}([0,y))^2 \rd y \rd \delta \\
& = &  \int_0^1 \left( \int_0^y \Delta_{\cP+\delta}([0,y))^2 \rd \delta + \int_y^1 \Delta_{\cP+\delta}([0,y))^2 \rd \delta\right)\rd y.
\end{eqnarray*}
We consider two cases:
\begin{itemize}
\item If $\delta < y$, then $\{x_j+\delta\} \in [0,y)$ iff $x_j+\delta \in [0,y)$ or $x_j+\delta-1 \in [0,y)$ and this holds iff $x_j \in [0,y-\delta)\cup [1-\delta,1)$. Hence $$\{x_j+\delta\} \in [0,y) \ \Leftrightarrow \ x_j \in I(1-\delta,y-\delta).$$ Note that ${\rm volume}(I(1-\delta,y-\delta))=y$.
\item If $\delta\ge y$, then $\{x_j+\delta\} \in [0,y)$ iff $x_j+\delta \in [0,y)$ or $x_j+\delta-1 \in [0,y)$ and this holds iff $x_j \in [1-\delta,1+y-\delta)$. Hence $$\{x_j+\delta\} \in [0,y) \ \Leftrightarrow \ x_j \in I(1-\delta,1+y-\delta).$$  Note that ${\rm volume}( I(1-\delta,1+y-\delta))=y$.
\end{itemize}
Hence,
\begin{eqnarray*}
\lefteqn{\EE_{\delta}[(L_{2,N}(\cP+\delta))^2]}\\
& = & \int_0^1 \left( \int_0^y \Delta_{\cP}(I(1-\delta,y-\delta))^2 \rd \delta + \int_y^1 \Delta_{\cP}(I(1-\delta,1+y-\delta))^2 \rd \delta\right)\rd y\\
& = & \int_0^1 \left( \int_{1-y}^1 \Delta_{\cP}(I(t,y+t-1))^2 \rd t + \int_0^{1-y} \Delta_{\cP}(I(t,y+t))^2 \rd t\right)\rd y\\
& = & \int_0^1\int_{1-t}^1 \Delta_{\cP}(I(t,y+t-1))^2 \rd y \rd t + \int_0^1 \int_0^{1-t}  \Delta_{\cP}(I(t,y+t))^2 \rd y \rd t\\
& = & \int_0^1\int_0^t \Delta_{\cP}(I(t,z))^2 \rd z \rd t + \int_0^1 \int_t^1  \Delta_{\cP}(I(t,z))^2 \rd z \rd t\\
& = & \int_0^1\int_0^1 \Delta_{\cP}(I(t,z))^2 \rd z \rd t \\
& = & (L_{2,N}^{{\rm per}}(\cP))^2,
\end{eqnarray*} 
where we just applied several elementary substitutions.
\end{proof}

Another important fact is, that the periodic $L_2$-discrepancy can be expressed in terms of exponential sums.

\begin{proposition}\label{pr_dia}
We have $$(L_{2,N}^{{\rm per}}(\cP))^2=\frac{1}{3^d} \sum_{\bsk \in \ZZ^d\setminus\{\bszero\}} \frac{1}{r(\bsk)^2} \left|\frac{1}{N} \sum_{h=1}^N \exp(2 \pi \icomp \bsk \cdot \bsx_h)\right|^2,$$ where $\icomp=\sqrt{-1}$ and where for $\bsk=(k_1,\ldots,k_d)\in \ZZ^d$ we set $$r(\bsk)=\prod_{j=1}^d r(k_j) \ \ \ \mbox{ and } \ \ r(k_j)=\left\{ 
\begin{array}{ll}
1 & \mbox{ if $k_j=0$},\\
\frac{2 \pi |k_j|}{\sqrt{6}} & \mbox{ if $k_j\not=0$.} 
\end{array}\right.$$
\end{proposition}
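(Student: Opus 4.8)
The plan is to expand the local discrepancy into a Fourier series on the $2d$-dimensional torus in the box parameters $(\bsx,\bsy)$ and then invoke Parseval's identity. The starting point is the elementary identity (valid up to a set of measure zero)
$$\mathbf{1}_{I(x,y)}(a) = \mathbf{1}_{[0,y)}(a) - \mathbf{1}_{[0,x)}(a) + \mathbf{1}_{\{x>y\}},$$
which I would verify by treating the cases $x\le y$ and $x>y$ separately. Since $\bsx_h\in B(\bsx,\bsy)$ factorises over coordinates, the local discrepancy reads
$$\Delta_{\cP}(B(\bsx,\bsy)) = \frac1N\sum_{h=1}^N\prod_{j=1}^d\phi(x_j,y_j,x_{h,j}) - \prod_{j=1}^d V(x_j,y_j),$$
with $\phi(x,y,a)=\mathbf{1}_{I(x,y)}(a)$ and $V(x,y)=\{y-x\}=\int_0^1\phi(x,y,a)\rd a$. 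As $\Delta_{\cP}$ is bounded, Parseval gives $(L_{2,N}^{{\rm per}}(\cP))^2=\sum_{(\bsm,\boldsymbol{n})\in\ZZ^{2d}}|\widehat{\Delta}(\bsm,\boldsymbol{n})|^2$, where $\widehat{\Delta}(\bsm,\boldsymbol{n})$ is the Fourier coefficient with respect to $\exp(2\pi\icomp(\bsm\cdot\bsx+\boldsymbol{n}\cdot\bsy))$.

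The product structure reduces everything to one dimension: $\widehat{\Delta}(\bsm,\boldsymbol{n})=\frac1N\sum_h\prod_j\widehat{\phi}_{m_j,n_j}(x_{h,j})-\prod_j\widehat{V}_{m_j,n_j}$, where $\widehat{\phi}_{m,n}(a)$ and $\widehat{V}_{m,n}$ are the Fourier coefficients of $\phi(\cdot,\cdot,a)$ and of $V$ on $[0,1]^2$. Using the three-term decomposition above, I would compute these explicitly; the outcome is that $\widehat{\phi}_{m,n}(a)$ vanishes unless $(m,n)$ is of one of four types, namely $(0,0)$ giving $1/2$; $(0,n)$ with $n\ne0$ giving $\exp(-2\pi\icomp n a)/(2\pi\icomp n)$; $(m,0)$ with $m\ne0$ giving $-\exp(-2\pi\icomp m a)/(2\pi\icomp m)$; and $(m,-m)$ with $m\ne0$ giving the constant $1/(2\pi\icomp m)$. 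Moreover $\widehat{V}_{m,n}=\int_0^1\widehat{\phi}_{m,n}(a)\rd a$ equals $\widehat{\phi}_{m,n}$ itself in the two $a$-independent cases and equals $0$ in the two $a$-dependent cases.

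Call a coordinate \emph{active} if $(m_j,n_j)$ is of the $a$-dependent type $(0,n_j)$ or $(m_j,0)$, and \emph{passive} otherwise. The crucial observation is that if every coordinate is passive then $\prod_j\widehat{\phi}_{m_j,n_j}$ is constant in the points and equals $\prod_j\widehat{V}_{m_j,n_j}$, so the volume term cancels it and $\widehat{\Delta}(\bsm,\boldsymbol{n})=0$; while if at least one coordinate is active the volume product vanishes and $\widehat{\Delta}(\bsm,\boldsymbol{n})=\frac1N\sum_h\prod_j\widehat{\phi}_{m_j,n_j}(x_{h,j})$. In every active coordinate the two admissible types contribute the common factor $\exp(-2\pi\icomp k_j x_{h,j})$ with a fixed nonzero frequency $k_j$ and modulus $1/(2\pi|k_j|)$, so that $\widehat{\Delta}(\bsm,\boldsymbol{n})$ factors into scalars times the conjugate exponential sum $\frac1N\sum_h\exp(-2\pi\icomp\bsk\cdot\bsx_h)$, where $\bsk\in\ZZ^d$ is supported exactly on the active coordinates. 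Taking $|\,\cdot\,|^2$ and summing, I would regroup according to this frequency vector $\bsk$: each active coordinate yields $2\cdot(2\pi|k_j|)^{-2}=(2\pi^2k_j^2)^{-1}$ (summing the two types), and each passive coordinate yields $\tfrac14+\sum_{m\ne0}(2\pi m)^{-2}=\tfrac14+\tfrac1{12}=\tfrac13$ by Euler's evaluation $\sum_{m\ge1}m^{-2}=\pi^2/6$. Comparing the resulting product of constants with $3^{-d}r(\bsk)^{-2}$ then finishes the proof.

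The main obstacle I expect is the explicit one-dimensional Fourier coefficient computation together with the bookkeeping that follows: one must keep the four cases straight, verify that the volume term exactly annihilates the all-passive frequencies (this is precisely what collapses the unwieldy $\ZZ^{2d}$-sum into a single sum over $\bsk\in\ZZ^d$), and correctly combine the two active types and the geometric passive sum so that the constants assemble into exactly $3^{-d}r(\bsk)^{-2}$. The endpoint and measure-zero issues in the indicator identity are harmless for the $L_2$ integrals, but should be noted in passing.
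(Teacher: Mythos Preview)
Your argument is correct. The paper does not actually prove this proposition; its entire proof reads ``See \cite[p.~390]{HOe}''. So there is no in-paper argument to compare against beyond that citation.

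Your route---Parseval on the $2d$-torus in the box parameters $(\bsx,\bsy)$, with the active/passive bookkeeping collapsing the $\ZZ^{2d}$-sum to a sum over $\bsk\in\ZZ^d$---is a clean self-contained derivation, and the coefficient computations and the final identification $\tfrac{1}{3}\cdot\tfrac{1}{2\pi^2 k_j^2}=3^{-1}r(k_j)^{-2}$ check out. The more standard derivation (and the one implicitly underlying the closed-form kernel $-3^{-d}+N^{-2}\sum_{n,m}\prod_j\bigl(\tfrac{1}{3}+B_2(|x_{n,j}-x_{m,j}|)\bigr)$ used later in the paper's proof of Theorem~\ref{thm:curse}) instead integrates out $(\bsx,\bsy)$ first to obtain this Bernoulli kernel and then inserts the Fourier expansion $B_2(\{t\})=(2\pi^2)^{-1}\sum_{k\ne 0}k^{-2}e^{2\pi\icomp kt}$. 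That approach is shorter once the kernel is known, while yours has the advantage of being a direct Parseval computation that explains structurally why only a ``diagonal'' family of $\ZZ^{2d}$-frequencies survives.
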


\begin{proof}
See \cite[p.~390]{HOe}. 
\end{proof}

The above formula shows that the periodic $L_2$-discrepancy is---up to a multiplicative factor---exactly the diaphony which is a well-known measure for the irregularity of distribution of point sets and which was introduced by Zinterhof~\cite{zint} in the year 1976. \\

In Section~\ref{Kps} we will estimate the periodic $L_2$-discrepancy of certain types of point sets that recently gained some attention in the context of error bounds with favourable weak dependence on the dimension $d$ (see \cite{dick2014,DP15}). On the other hand, we also show in Section~\ref{cod} that even the weighted version of the periodic $L_2$-discrepancy suffers from the curse of dimensionality.

\section{The periodic $L_2$-discrepancy of Korobov's $p$-sets}\label{Kps}

Let $p$ be a prime number. We consider the following point sets in $[0,1)^d$: 
\begin{itemize}
\item Let $\cP_p^{{\rm Kor}}=\{\bsx_0,\ldots,\bsx_{p-1}\}$ with $$\bsx_n=\left(\left\{\frac{n}{p}\right\},\left\{\frac{n^2}{p}\right\},\ldots,\left\{\frac{n^d}{p}\right\}\right)\ \ \ \mbox{ for }\ n=0,1,\ldots,p-1.$$ The point set $\cP_p^{{\rm Kor}}$ was introduced by Korobov \cite{kor} (see also \cite[Section 4.3]{HuWa}).
\item Let $\cQ_{p^2}^{{\rm Kor}}=\{\bsx_0,\ldots,\bsx_{p^2-1}\}$ with $$\bsx_n=\left(\left\{\frac{n}{p^2}\right\},\left\{\frac{n^2}{p^2}\right\},\ldots,\left\{\frac{n^d}{p^2}\right\}\right)\ \ \ \mbox{ for }\ n=0,1,\ldots,p^2-1.$$ The point set $\cQ_{p^2}^{{\rm Kor}}$ was introduced by Korobov \cite{kor1957} (see also \cite[Section 4.3]{HuWa}).
\item Let $\cR_{p^2}^{{\rm Kor}}=\{\bsx_{a,k}\ : \ a,k \in \{0,\ldots,p-1\}\}$ with $$\bsx_{a,k}=\left(\left\{\frac{k}{p}\right\},\left\{\frac{a k}{p}\right\},\ldots,\left\{\frac{a^{d-1} k}{p}\right\}\right)\ \ \ \mbox{ for }\ a,k=0,1,\ldots,p-1.$$ Note that $\cR_{p^2}^{{\rm Kor}}$ is the multi-set union of all Korobov lattice point sets with modulus $p$. The point set $\cR_{p^2}^{{\rm Kor}}$ was introduced by Hua and Wang (see \cite[Section 4.3]{HuWa}).
\end{itemize}

Hua and Wang \cite{HuWa} called the point sets  $\cP_p^{{\rm Kor}}$, $\cQ_{p^2}^{{\rm Kor}}$ and $\cR_{p^2}^{{\rm Kor}}$ the {\it $p$-sets}. We have $$| \cP_p^{{\rm Kor}}|=p \ \ \mbox{ and }\ \ |\cQ_{p^2}^{{\rm Kor}}|=|\cR_{p^2}^{{\rm Kor}}|=p^2.$$

\begin{theorem}\label{thm1}
For Korobov's $p$-sets $\cP \in \{\cP^{{\rm Kor}}_p,\cQ^{{\rm Kor}}_{p^2},\cR^{{\rm Kor}}_{p^2}\}$ we have
$$L_{2,N}^{{\rm per}}(\cP) \le \frac{d}{2^{d/2}}  \frac{1}{\sqrt{N}}.$$
\end{theorem}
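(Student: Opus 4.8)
The plan is to use Proposition~\ref{pr_dia} to express the squared periodic $L_2$-discrepancy in terms of exponential sums, and then bound the exponential sums for each of the three $p$-sets. The key quantity I need to control is
$$S(\bsk)=\left|\frac{1}{N}\sum_{h} \exp(2\pi\icomp\, \bsk\cdot\bsx_h)\right|^2,$$
so that $(L_{2,N}^{{\rm per}}(\cP))^2 = 3^{-d}\sum_{\bsk\neq\bszero} r(\bsk)^{-2}\,S(\bsk)$. For $\cP_p^{{\rm Kor}}$, the inner dot product is $\bsk\cdot\bsx_n = \sum_{j=1}^d k_j n^j/p \pmod 1$, so the exponential sum is a complete character sum over the polynomial $f(n)=\sum_{j=1}^d k_j n^j$ modulo $p$. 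The standard tool here is the Weil bound: for $\bsk$ not all divisible by $p$ and $\deg f \le d$, one has $|\sum_{n=0}^{p-1}\exp(2\pi\icomp f(n)/p)| \le (d-1)\sqrt{p}$, giving $S(\bsk)\le (d-1)^2/p$ roughly, while the terms where $p\mid k_j$ for all $j$ contribute differently and must be split off. The analogous sums for $\cQ_{p^2}^{{\rm Kor}}$ and $\cR_{p^2}^{{\rm Kor}}$ need parallel treatment — for $\cR_{p^2}^{{\rm Kor}}$ one would first sum over $k$ (detecting when $\sum_j k_j a^{j-1}\equiv 0 \pmod p$) and then over $a$, which again reduces to counting roots of a polynomial mod $p$.

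The structure I would follow is: first, split the sum over $\bsk\in\ZZ^d\setminus\{\bszero\}$ according to whether the reduced frequency vector vanishes modulo $p$ (in which case $S(\bsk)=1$ trivially, but $r(\bsk)$ is large because such $\bsk$ have large entries) or not (in which case $S(\bsk)$ is small by the character-sum bound but $r(\bsk)$ may be moderate). Second, for the nonvanishing part I would plug in $S(\bsk)\le C_d/N$ (with the constant coming from the degree of the polynomial) and bound the remaining sum $\sum_{\bsk\neq\bszero} r(\bsk)^{-2}$, which factorizes as $\prod_{j=1}^d(1 + 2\sum_{k\neq 0}\frac{6}{(2\pi k)^2})$. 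Using $\sum_{k\ge 1}k^{-2}=\pi^2/6$, each factor equals $1 + 2\cdot\frac{6}{4\pi^2}\cdot\frac{\pi^2}{6}=1+\frac{1}{2}\cdot\ldots$; the precise constant is what ultimately produces the clean bound $\frac{d}{2^{d/2}}\frac{1}{\sqrt N}$. Third, I would handle the vanishing part separately and show it contributes a lower-order or comparably-sized term.

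The main obstacle I anticipate is twofold. The harder analytic point is getting the character-sum bound to yield exactly the factor $d/2^{d/2}$ rather than a weaker $O(d^2)$ or $O(d)$-with-worse-constant estimate: the naive Weil bound gives $(d-1)^2$ in the numerator of $S(\bsk)$, and I expect the paper achieves the stated bound by a more careful argument — perhaps by separating according to how many coordinates of the reduced $\bsk$ are nonzero, using that a degree-$m$ polynomial bound $(m-1)\sqrt p$ improves when fewer terms are present, and then performing the sum over $\bsk$ coordinate-by-coordinate so that the $3^{-d}$ and the $r(\bsk)^{-2}$ factors exactly telescope against the combinatorial count. The second obstacle is purely bookkeeping: isolating the frequencies $\bsk$ that are divisible by $p$ in every coordinate and verifying their total contribution does not spoil the bound, since for those the trivial estimate $S(\bsk)=1$ holds and one relies entirely on the decay of $r(\bsk)^{-2}$ for large $|k_j|$. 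I would expect the three point sets to be treated by essentially the same template, with $\cR_{p^2}^{{\rm Kor}}$ requiring the most care because of its double-sum structure over $(a,k)$.
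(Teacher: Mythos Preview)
Your overall plan matches the paper's proof exactly: invoke Proposition~\ref{pr_dia}, split $\sum_{\bsk\neq\bszero}$ according to whether $p\mid\bsk$ (all coordinates) or not, apply the Weil-type bound $|{\cdot}|\le (d-1)\sqrt{p}$ (resp.\ $(d-1)p$) from Lemma~\ref{le3} on the non-divisible part, and use the trivial bound $S(\bsk)=1$ on the divisible part together with the decay of $r(\bsk)^{-2}$.

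The ``main obstacle'' you anticipate, however, does not exist. The naive Weil bound already yields the stated constant; no refinement by the effective degree of $\bsk$ is needed. Finish the computation you started: each one-dimensional factor of $\sum_{\bsk\in\ZZ^d} r(\bsk)^{-2}$ equals
\[
1+2\sum_{k\ge 1}\frac{6}{4\pi^2 k^2}=1+\tfrac{1}{2}=\tfrac{3}{2},
\]
so $\sum_{\bsk\in\ZZ^d} r(\bsk)^{-2}=(3/2)^d$ and $\sum_{\bsk\neq\bszero,\ p\mid\bsk} r(\bsk)^{-2}=(1+\tfrac{1}{2p^2})^d-1$. For $\cP_p^{\rm Kor}$ (so $N=p$) this gives
\[
3^d\,(L_{2,N}^{\rm per})^2 \le \frac{(d-1)^2}{N}\Bigl(\tfrac{3}{2}\Bigr)^d + \Bigl(1+\tfrac{1}{2N^2}\Bigr)^d-1
\le \frac{(d-1)^2}{N}\Bigl(\tfrac{3}{2}\Bigr)^d + \frac{1}{N^2}\Bigl(\tfrac{3}{2}\Bigr)^d
\le \frac{d^2}{N}\Bigl(\tfrac{3}{2}\Bigr)^d,
\]
and dividing by $3^d$ gives exactly $(L_{2,N}^{\rm per})^2\le d^2/(2^d N)$. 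The $(d-1)^2$ from Weil is what produces the $d$ in the theorem; there is no hidden sharpening. The cases $\cQ_{p^2}^{\rm Kor}$ and $\cR_{p^2}^{\rm Kor}$ go through identically with \eqref{WeilQ} and \eqref{WeilR}, and the paper in fact only writes out the first two.
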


For the proof of Theorem~\ref{thm1} need the following lemma.

\begin{lemma}\label{le3}
Let $p$ be a prime number and let $d \in \NN$. Then for all $h_1,\ldots,h_d\in \ZZ$ such that $p \nmid h_j$ for at least one $j \in \{1,2,\ldots,d\}$ we have 
\begin{equation}\label{WeilP}
\left|\sum_{n=0}^{p-1} \exp(2 \pi \icomp(h_1 n+h_2 n^2+\cdots+h_d n^d)/p)\right| \le (d-1) \sqrt{p},
\end{equation}

\begin{equation}\label{WeilQ}
\left|\sum_{n=0}^{p^2-1} \exp(2 \pi \icomp(h_1 n+h_2 n^2+\cdots+h_d n^d)/p^2)\right| \le (d-1) p. 
\end{equation}

\begin{equation}\label{WeilR}
\left|\sum_{a=0}^{p-1} \sum_{k=0}^{p-1} \exp(2 \pi \icomp k (h_1 +h_2 a+\cdots+h_d a^{d-1})/p)\right| \le (d-1) p.
\end{equation}

\end{lemma}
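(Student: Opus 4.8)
The plan is to prove the three exponential sum bounds \eqref{WeilP}, \eqref{WeilQ}, \eqref{WeilR} by reducing each to a classical Weil-type estimate for exponential sums of polynomials. In all three cases the key object is a sum of the form $\sum \exp(2\pi\icomp f(n)/q)$ where $f$ is a polynomial of degree at most $d$ and $q$ is either $p$ or $p^2$, and the hypothesis $p\nmid h_j$ for some $j$ guarantees that $f$ is not constant modulo $p$.

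\medskip

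For \eqref{WeilP} I would invoke the Weil bound directly. Setting $f(n)=h_1 n+h_2 n^2+\cdots+h_d n^d$, the hypothesis that $p\nmid h_j$ for at least one $j$ means $f$ is a polynomial over the field $\mathbb{F}_p$ of degree $m$ with $1\le m\le d$ and $\gcd(m,p)$ not forcing degeneracy; the standard Weil estimate gives $\left|\sum_{n=0}^{p-1}\exp(2\pi\icomp f(n)/p)\right|\le (m-1)\sqrt{p}\le (d-1)\sqrt{p}$, since $m\le d$. The only subtlety is making sure the leading coefficient of $f$ modulo $p$ is nonzero so that the effective degree $m$ is well defined; if the top coefficients $h_d,h_{d-1},\ldots$ happen to vanish mod $p$, one simply passes to the largest index $j$ with $p\nmid h_j$, and that $j$ plays the role of $m$. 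This is the cleanest of the three and I expect it to be essentially a citation to Weil's theorem.

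\medskip

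For \eqref{WeilR} I would first exploit the inner sum over $k$. Writing $g(a)=h_1+h_2 a+\cdots+h_d a^{d-1}$, the inner sum $\sum_{k=0}^{p-1}\exp(2\pi\icomp k\,g(a)/p)$ equals $p$ when $g(a)\equiv 0\pmod p$ and equals $0$ otherwise. Hence the double sum equals $p\cdot N_0$, where $N_0=\#\{a\in\{0,\ldots,p-1\}: g(a)\equiv 0\pmod p\}$ is the number of roots of the polynomial $g$ in $\mathbb{F}_p$. Since the hypothesis guarantees that $g$ is not identically zero mod $p$ (some $h_j\not\equiv 0$), $g$ has degree at most $d-1$ and therefore at most $d-1$ roots by Lagrange's theorem. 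Thus $|p\cdot N_0|\le (d-1)p$, giving \eqref{WeilR}. This is elementary and I expect no obstacle here.

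\medskip

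For \eqref{WeilQ} the modulus is the prime power $p^2$, so the Weil bound over $\mathbb{F}_p$ does not apply directly; this is where I expect the main difficulty. The standard approach is to write $n=u+pv$ with $u\in\{0,\ldots,p-1\}$ and $v\in\{0,\ldots,p-1\}$, expand $f(u+pv)$ using the Taylor-type expansion $f(u+pv)\equiv f(u)+pv f'(u)\pmod{p^2}$ (higher terms carry a factor $p^2$), and separate the sum over $v$. The sum over $v$ of $\exp(2\pi\icomp p v f'(u)/p^2)=\exp(2\pi\icomp v f'(u)/p)$ vanishes unless $f'(u)\equiv 0\pmod p$, in which case it equals $p$. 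So the whole sum reduces to $p\sum_{u:\,f'(u)\equiv 0\,(p)}\exp(2\pi\icomp f(u)/p^2)$, whose modulus is at most $p$ times the number of $u\in\mathbb{F}_p$ with $f'(u)\equiv 0$. Since $f'$ has degree at most $d-1$ and is not identically zero mod $p$ (which needs a short argument from the hypothesis, treating separately the degenerate case where $f'\equiv 0$ but $f\not\equiv$ const), the number of such $u$ is at most $d-1$, yielding the bound $(d-1)p$. The one case requiring care is when $f\equiv h_1 n\pmod p$ with all higher coefficients divisible by $p$ but, say, $p\nmid h_1$; then $f'\equiv h_1\not\equiv 0$ has no roots and the sum is zero, which is consistent with the bound. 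Handling the stationary-phase reduction mod $p^2$ cleanly, and verifying $f'\not\equiv 0\pmod p$ under the stated hypothesis, is the crux of this part.
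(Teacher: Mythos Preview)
The paper's own proof of this lemma is entirely by citation: \eqref{WeilP} is referred to Weil's bound (via \cite{weil,lini,dick2014}), \eqref{WeilQ} to \cite[Lemma~4.6]{HuWa}, and \eqref{WeilR} to \cite{DP15}. Your proposal instead supplies self-contained arguments. For \eqref{WeilP} and \eqref{WeilR} your sketches are correct and standard; in particular, summing the inner geometric series and then counting roots of $g(a)=h_1+h_2 a+\cdots+h_d a^{d-1}$ over $\mathbb{F}_p$ is precisely the argument used in the cited reference for \eqref{WeilR}.

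For \eqref{WeilQ} your stationary-phase reduction $n=u+pv$ is the right method and is essentially what is done in Hua--Wang. You correctly flag the verification that $f'\not\equiv 0\pmod p$ as ``the crux,'' but you leave it unresolved, and in fact it cannot be resolved under the hypothesis as literally stated: if $d\ge p$ it may happen that $p\nmid h_j$ only for indices $j$ divisible by $p$, whence every coefficient $jh_j$ of $f'$ vanishes modulo $p$. The bound \eqref{WeilQ} actually fails in that regime; for instance with $p=2$, $d=2$, $h_1=0$, $h_2=1$ one gets $\sum_{n=0}^{3}\exp(2\pi\icomp\, n^2/4)=2+2\icomp$, of modulus $2\sqrt{2}>2=(d-1)p$, and similarly $p=d=3$, $h_3=1$ gives a sum of modulus exceeding $6$. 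The result in Hua--Wang carries a hypothesis such as $d<p$ (implicit here, and harmless for the applications in the paper since $p$ is taken large), under which your degenerate case cannot arise and your argument goes through cleanly. So the gap you identified is genuine, but it reflects a missing side condition in the lemma's statement rather than a defect in your method; once $d<p$ is assumed, your proof of \eqref{WeilQ} is complete.
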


\begin{proof}
Eq.~\eqref{WeilP} follows from a bound from A. Weil \cite{weil} on exponential sums which is widely known as {\it Weil bound} (see also \cite[Theorem~5.38]{lini}). For details we refer to~\cite{dick2014}. For a proof of Eq.~\eqref{WeilQ} we refer to \cite[Lemma~4.6]{HuWa}. A proof of Eq.~\eqref{WeilR} can be found in \cite{DP15}.
\end{proof}

\begin{proof}[Proof of Theorem~\ref{thm1}]
We use the formula from Proposition~\ref{pr_dia} for the periodic $L_2$-discrepancy and estimate the  exponential sum with the help of Lemma~\ref{le3}. We provide the details only for $\cP_p^{{\rm Kor}}$ and $\cQ_{p^2}^{{\rm Kor}}$. The proof for $\cR_{p^2}^{{\rm Kor}}$ is analoguous.

Before we start we mention the following easy results that will be used later in the proof. For $\bsk=(k_1,\ldots,k_d)\in \ZZ^d$ we write $N |\bsk$ to indicate that $N| k_j$ for all $j \in \{1,2,\ldots,d\}$. Likewise, $N \nmid\bsk$ indicates that there is at least one index $j\in \{1,2,\ldots,d\}$ with $N \nmid k_j$. We have 
\begin{eqnarray*}
\sum_{\bsk\in \ZZ^d} \frac{1}{r(\bsk)^2} = \left(1+ 2 \sum_{k=1}^{\infty} \frac{6}{4 \pi^2 k^2}\right)^d =\left(1+2  \frac{6}{4 \pi^2} \frac{\pi^2}{6}\right)^d= \left(\frac{3}{2}\right)^d
\end{eqnarray*}
and 
\begin{eqnarray*}
\sum_{\bsk\in \ZZ \atop N \nmid \bsk} \frac{1}{r(\bsk)^2} & = & \sum_{\bsk \in \ZZ^d} \frac{1}{r(\bsk)^2} - \sum_{\bsk\in \ZZ\atop N | \bsk} \frac{1}{r(\bsk)^2} \\
& = & \left(\frac{3}{2}\right)^d - \left(1+ 2 \sum_{k=1}^{\infty}  \frac{6}{4 \pi^2 (N k)^2}\right)^d\\
& = & \left(\frac{3}{2}\right)^d - \left(1+ \frac{1}{2 N^2}\right)^d.
\end{eqnarray*}

First we study $\cP_p^{{\rm Kor}}$. Then $N=|\cP_p^{{\rm Kor}}|=p$. Using Eq.~\eqref{WeilP} from Lemma~\ref{le3} we have
\begin{eqnarray*}
\lefteqn{\sum_{\bsk \in \ZZ^d\setminus\{\bszero\}} \frac{1}{r(\bsk)^2} \left|\frac{1}{N} \sum_{h=1}^N \exp(2 \pi \icomp \bsk \cdot \bsx_h)\right|^2}\\
& \le &  \sum_{\bsk \in \ZZ^d \atop N \nmid \bsk} \frac{1}{r(\bsk)^2} \frac{(d-1)^2}{N}+ \sum_{\bsk \in \ZZ^d\setminus\{\bszero\}\atop N | \bsk} \frac{1}{r(\bsk)^2} \left|\frac{1}{N} \sum_{h=0}^{N-1} \exp\left(\frac{2 \pi \icomp \bsk \cdot (h,h^2,\ldots,h^d)}{N}\right)\right|^2\\
& = &  \frac{(d-1)^2}{N} \sum_{\bsk \in \ZZ^d\atop N \nmid \bsk} \frac{1}{r(\bsk)^2} + \sum_{\bsk \in \ZZ^d\setminus\{\bszero\}} \frac{1}{r(N \bsk)^2}\\
& = &  \frac{(d-1)^2}{N} \left(\left(\frac{3}{2}\right)^d- \left(1+\frac{1}{2 N^2}\right)^d\right)+\left(1+\frac{1}{2 N^2}\right)^d-1\\
& \le & \frac{(d-1)^2}{N} \left(\frac{3}{2}\right)^d+ \frac{1}{N^2} \left(\frac{3}{2}\right)^d\\
& \le & \frac{d^2}{N}  \left(\frac{3}{2}\right)^d.
\end{eqnarray*}
Inserting this bound into the formula given in Proposition~\ref{pr_dia} gives $$(L_{2,N}^{{\rm per}}(\cP^{{\rm Kor}}_p))^2\le \frac{1}{3^d}  \frac{d^2}{N}  \left(\frac{3}{2}\right)^d =  \frac{d^2}{2^d}  \frac{1}{N}.$$ This yields the desired result for $\cP_p^{{\rm Kor}}$.

Now we turn to $\cQ_{p^2}^{{\rm Kor}}$. Here $N=|\cQ_{p^2}^{{\rm Kor}}|=p^2$. Using Eq.~\eqref{WeilQ} from Lemma~\ref{le3} we have 
\begin{eqnarray*}
\lefteqn{\sum_{\bsk \in \ZZ^d\setminus\{\bszero\}} \frac{1}{r(\bsk)^2} \left|\frac{1}{N} \sum_{h=1}^N \exp(2 \pi \icomp \bsk \cdot \bsx_h)\right|^2}\\
& \le &  \sum_{\bsk \in \ZZ^d \atop p \nmid \bsk} \frac{1}{r(\bsk)^2} \frac{(d-1)^2}{N}+ \sum_{\bsk \in \ZZ^d\setminus\{\bszero\}\atop p | \bsk} \frac{1}{r(\bsk)^2} \left|\frac{1}{N} \sum_{h=0}^{N-1} \exp\left(\frac{2 \pi \icomp \bsk \cdot (h,h^2,\ldots,h^d)}{N}\right)\right|^2\\
& \le &  \sum_{\bsk \in \ZZ^d \atop p \nmid \bsk} \frac{1}{r(\bsk)^2} \frac{(d-1)^2}{N}+ \sum_{\bsk \in \ZZ^d\setminus\{\bszero\}} \frac{1}{r(p \bsk)^2}\\
& \le & \frac{(d-1)^2}{N} \left(\left(\frac{3}{2}\right)^d -\left(1+\frac{1}{2 p^2}\right)^d \right) + \frac{1}{p^2} \left( \left(\frac{3}{2}\right)^d- 1\right)\\
& \le & \frac{d^2}{N} \left(\frac{3}{2}\right)^d.
\end{eqnarray*}
Hence
$$(L_{2,N}^{{\rm per}}(\cQ^{{\rm Kor}}_{p^2}))^2\le \frac{d^2}{2^d}  \frac{1}{N} .$$ This yields the desired result for $\cQ_{p^2}^{{\rm Kor}}$.

In order to prove the bound on $L_{2,N}^{{\rm per}}(\cR_{p^2}^{{\rm Kor}})$ use Eq.~\eqref{WeilR} from Lemma~\ref{le3}.
\end{proof}

\begin{remark}\rm
Note that the dependence on the dimension of our bound on the periodic $L_2$-discrepancy is only $d 2^{-d/2}$, which looks very promising regarding tractability properties at first sight. However, it can be easily checked that already the periodic $L_2$-discrepancy of the empty set is only $3^{-d/2}$ (see the forthcoming Lemma~\ref{leinit}) which is much smaller than $d 2^{-d/2}$. We will see in the next section that the  periodic $L_2$-discrepancy suffers from the curse of dimensionality.
\end{remark}

In discrepancy theory many results are based on averaging arguments. In order to make such results relevant for applications one requires explicit constructions of point sets with ``averge'' discrepancy.
It is easily checked that the average squared periodic $L_2$-discrepancy $${\rm avg}_2^{{\rm per}}(N,d):=\left(\int_{[0,1]^d}\cdots \int_{[0,1]^d} (L_{2,N}^{{\rm per}}(\{\bsx_1,\ldots,\bsx_N\}))^2 \rd \bsx_1 \ldots \rd \bsx_N\right)^{1/2}$$ equals $$\frac{1}{\sqrt{N}} \left(\frac{1}{2^d}-\frac{1}{3^d}\right)^{1/2}.$$ Hence, Theorem~\ref{thm1} shows that the periodic $L_2$-discrepancy of Korobov's $p$-set is almost of average order.

\begin{remark}\rm
There is some relation to {\bf Open Problem 40} in \cite[p.~57]{NW10}. There the authors ask for a construction of an $N$-element point set in dimension $d$ in time polynomial in $N$ and $d$ for which the $L_p$-discrepancy is less than the average $L_p$-discrepancy taken over all $N$-element point sets in dimension $d$. Here we have---up to a linear factor $d$---the answer to this question for the periodic $L_2$-discrepancy.
\end{remark}

\begin{remark}\rm
Our result also gives some information about the usual $L_2$-discrepancy: The average $L_2$-discrepancy is  $${\rm avg}_2(N,d):=\left(\int_{[0,1]^d}\cdots \int_{[0,1]^d} (L_{2,N}(\{\bsx_1,\ldots,\bsx_N\}))^2 \rd \bsx_1 \ldots \rd \bsx_N\right)^{1/2}$$ and this is well-known to be  $$\frac{1}{\sqrt{N}} \left(\frac{1}{2^d}-\frac{1}{3^d}\right)^{1/2}.$$
Using Proposition~\ref{pr1} we obtain that the root mean square $L_2$-discrepancy of the shifted $p$-sets is at most  $ \frac{d}{2^{d/2}}  \frac{1}{\sqrt{N}}$ and hence almost of average order. This also implies that there must exist a shift $\bsdelta^* \in [0,1)$ for the $p$-sets $\cP\in \{\cP^{{\rm Kor}}_p,\cQ^{{\rm Kor}}_{p^2},\cR^{{\rm Kor}}_{p^2}\}$ such that $$L_{2,N}(\cP+\bsdelta^*)\le  \frac{d}{2^{d/2}}  \frac{1}{\sqrt{N}}.$$ Whether the $L_2$-discrepancy of the $p$-sets itself satisfies the bound $ \frac{d}{2^{d/2}}  \frac{1}{\sqrt{N}}$ is an interesting open problem.
\end{remark}

\section{The curse of dimensionality for the periodic $L_2$-discrepancy}\label{cod}

In this section we are interested in the inverse $N_2^{{\rm per}}(\varepsilon,d)$ of periodic $L_2$-discrepancy which is the minimal number $N \in \NN$ for which there exists an $N$-element point set $\cP$ in $[0,1)^d$ whose periodic $L_2$-discrepancy is less then $\varepsilon$ times the initial periodic $L_2$-discrepancy. Since lower bounds become stronger if proved also for the weighted version, we consider here also the inverse of the weighted periodic $L_2$-discrepancy $N_2^{{\rm per, w}}(\varepsilon,d)$ and compare the results to the results for the unweighted case and to the case where only positive weights are allowed.

\begin{lemma}\label{leinit}
The initial periodic $L_2$-discrepancy is 
 $$L_{2,0}^{{\rm per}}=\left(\int_{[0,1]^d} \int_{[0,1]^d} {\rm volume}(B(\bsx,\bsy))^2 \rd \bsx \rd \bsy\right)^{1/2}=\frac{1}{3^{d/2}}.$$ 
\end{lemma}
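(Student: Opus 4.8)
The plan is to compute the double integral directly, exploiting the product structure of the volume of a periodic box. The key observation is that the integrand factorizes: since $B(\bsx,\bsy)=I(x_1,y_1)\times\cdots\times I(x_d,y_d)$, we have ${\rm volume}(B(\bsx,\bsy))=\prod_{j=1}^d |I(x_j,y_j)|$, where $|I(x_j,y_j)|$ denotes the (one-dimensional) length of the periodic interval. Consequently the full $2d$-dimensional integral splits as a $d$-fold product of identical one-dimensional double integrals:
$$
\int_{[0,1]^d}\int_{[0,1]^d}{\rm volume}(B(\bsx,\bsy))^2\rd\bsx\rd\bsy
=\prod_{j=1}^d\int_0^1\int_0^1 |I(x_j,y_j)|^2\rd x_j\rd y_j
=\left(\int_0^1\int_0^1 |I(x,y)|^2\rd x\rd y\right)^d.
$$

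So the whole problem reduces to evaluating the single one-dimensional integral $\int_0^1\int_0^1 |I(x,y)|^2\rd x\rd y$. First I would read off the length of the periodic interval from its definition: if $x\le y$ then $I(x,y)=[x,y)$ has length $y-x$, while if $x>y$ then $I(x,y)=[0,y)\cup[x,1)$ has length $y+(1-x)=1-(x-y)$. In both cases one can write $|I(x,y)|=\{y-x\}$, the fractional part of $y-x$, which is a clean way to encode the two cases.

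Next I would evaluate $\int_0^1\int_0^1 \{y-x\}^2\rd x\rd y$. Fixing $y$ and substituting, the inner integral $\int_0^1\{y-x\}^2\rd x$ is, by periodicity of $u\mapsto\{u\}^2$, equal to $\int_0^1\{t\}^2\rd t=\int_0^1 t^2\rd t=\tfrac13$, independent of $y$. Hence the one-dimensional double integral equals $1/3$, and therefore the full integral is $(1/3)^d$. Taking the square root gives $L_{2,0}^{\rm per}=3^{-d/2}$, as claimed. (Alternatively, one could split the square directly into the two triangular regions $\{x\le y\}$ and $\{x>y\}$ and integrate $(y-x)^2$ and $(1-x+y)^2$ separately; both routes are routine.)

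I do not expect a genuine obstacle here; the result is elementary once the product structure is noticed. The only point requiring a little care is the case split in the definition of the periodic interval length and the verification that it collapses to the single expression $\{y-x\}$, after which the periodicity argument makes the computation immediate. One could also sidestep the explicit integration entirely by noting that $L_{2,0}^{\rm per}$ is the periodic $L_2$-discrepancy of the empty point set, and applying the product structure of the formula in Proposition~\ref{pr_dia} is not directly available since that formula is stated for point sets of positive cardinality; thus the direct computation above is the cleanest approach.
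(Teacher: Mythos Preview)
Your proof is correct: the factorization over coordinates reduces the question to the one-dimensional integral $\int_0^1\int_0^1\{y-x\}^2\,\mathrm{d}x\,\mathrm{d}y=1/3$, and the periodicity argument you give for this is clean and valid. The paper itself omits the proof entirely (``We omit the easy proof of this lemma''), so there is no alternative argument to compare against; your direct computation is precisely the sort of elementary verification the authors had in mind.
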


We omit the easy proof of this lemma.

Hence, for $\varepsilon \in (0,1)$ and $d \in \NN$ 
$$N_2^{{\rm per}}(\varepsilon,d)=\min\left\{ N \in \NN \ : \ \exists \ \cP \subseteq [0,1)^d,\, |\cP|=N \mbox{ and } L_{2,N}^{{\rm per}}(\cP) \le \frac{\varepsilon}{3^{d/2}}\right\}.$$
Analogously, we define $N_2^{{\rm per,w}}(\varepsilon,d)$ and $N_2^{{\rm per,w+}}(\varepsilon,d)$ if arbitrary or only positive weights are allowed, respectively.

In information based complexity one is particularly interested in the dependence of the inverse discrepancy on $\varepsilon^{-1}$ and $d$. In particular, a polynomial dependence on $d$ would be favourable (see, e.g., \cite{NW10}). This however is not achievable in the case of the periodic $L_2$-discrepancy. Indeed, the following theorem shows that the periodic $L_2$-discrepancy suffers from the curse of dimensionality. 

\begin{theorem}\label{thm:curse}
For $\varepsilon \in (0,1)$ and $d \in \NN$ we have 
$$N_2^{{\rm per}}(\varepsilon,d)   
\ge \frac{1}{1+\varepsilon^2} \left(\frac{3}{2}\right)^d
\quad \text{and} \quad 
N_2^{{\rm per,w+}}(\varepsilon,d)
\ge (1-\varepsilon^2) \, \left(\frac{3}{2}\right)^d.
$$
Moreover, for any $\varepsilon_0 \in (0,1)$ there exists $c>0$ such that
$$ 
 N_2^{{\rm per,w}}(\varepsilon,d) 
  \ge
 c \, 1.0628^d
$$
for all $d\in \NN$ and $\varepsilon\in (0,\varepsilon_0)$.
\end{theorem}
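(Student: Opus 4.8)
The plan is to establish lower bounds on the inverse of the periodic $L_2$-discrepancy by exploiting the exponential-sum formula in Proposition~\ref{pr_dia}, combined with the observation that the initial discrepancy is $L_{2,0}^{{\rm per}}=3^{-d/2}$ from Lemma~\ref{leinit}. The guiding idea is that $(L_{2,N}^{{\rm per}}(\cP))^2$ is a sum over all nonzero $\bsk$ of nonnegative terms weighted by $1/r(\bsk)^2$, and that one can extract a useful lower bound by keeping only a small, cleverly chosen subset of frequencies $\bsk$ or by averaging. For the unweighted and positively-weighted cases, I would isolate the contribution of the lowest nontrivial frequencies, namely the $2d$ vectors $\bsk=\pm\bse_j$ (standard basis vectors), for which $r(\bsk)^2 = 4\pi^2/6 = 2\pi^2/3$, and show that the total squared discrepancy cannot drop below a quantity that forces $N \gtrsim (3/2)^d$.

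\medskip

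For the first two bounds, I would proceed as follows. Writing $S_{\bsk}(\cP,\bsw)=\sum_{h} w_h \exp(2\pi\icomp\,\bsk\cdot\bsx_h)$ (the weighted character sum), Proposition~\ref{pr_dia} generalizes to $(L_{2,N}^{{\rm per}}(\cP,\bsw))^2 = 3^{-d}\sum_{\bsk\neq\bszero} r(\bsk)^{-2}\,|S_{\bsk}|^2$ when the weights sum to one. The key algebraic step is to relate $\sum_{\bsk} r(\bsk)^{-2}|S_{\bsk}|^2$ (including $\bsk=\bszero$, which contributes $r(\bszero)^{-2}|S_{\bszero}|^2=1$ when $\sum w_h=1$) to a quantity I can bound below using Parseval-type reasoning or a direct averaging over the points. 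In the unweighted case, a natural route is to note that $\sum_{\bsk\in\ZZ^d} r(\bsk)^{-2}|\tfrac1N\sum_h e^{2\pi\icomp\bsk\cdot\bsx_h}|^2$ can be rewritten via the reproducing kernel as $\tfrac{1}{N^2}\sum_{h,\ell} K(\bsx_h,\bsx_\ell)$ where $K(\bsx,\bsy)=\prod_j(1+\tfrac{1}{2}B_2(\{x_j-y_j\}))$ or a similar periodic kernel whose diagonal value is $(3/2)^d$; the diagonal terms $h=\ell$ alone contribute $\tfrac{1}{N}(3/2)^d$, and since the kernel is of positive type the off-diagonal terms cannot make the total too small, yielding $(L_{2,N}^{{\rm per}})^2 \ge 3^{-d}\big(\tfrac1N(3/2)^d - 1\big)$. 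Setting this $\le 3^{-d}\varepsilon^2$ and solving gives $N \ge (3/2)^d/(1+\varepsilon^2)$. The positive-weights bound follows the same kernel computation but requires the elementary inequality relating $\sum_h w_h^2$ to the constraint $\sum_h w_h=1$ with $w_h\ge0$, which I expect to furnish the cleaner factor $(1-\varepsilon^2)$.

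\medskip

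The arbitrary-weights case is the main obstacle, because with signed weights the diagonal-dominance argument collapses: one can no longer bound $\sum_h w_h^2$ from below using $\sum_h w_h=1$. Here the plan is to select a carefully chosen finite set $A\subseteq\ZZ^d\setminus\{\bszero\}$ of frequencies and argue that the family of exponentials $\{(\exp(2\pi\icomp\bsk\cdot\bsx_h))_{\bsk\in A\cup\{\bszero\}}\}_h$, viewed as $N$ vectors in $\mathbb{C}^{|A|+1}$, cannot approximate the zero-discrepancy condition unless $N$ is large; concretely, if $N < |A|+1$ is impossible one uses a linear-algebra/rank argument, but more likely one uses a volume or determinant lower bound (a Turán-type or large-sieve inequality) showing that some frequency in $A$ must have $|S_{\bsk}|$ bounded away from zero. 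The delicate part is optimizing the choice of $A$ to make the per-frequency weight $r(\bsk)^{-2}$ and the combinatorial count balance so that the base of the exponential comes out as $1.0628$ rather than something smaller; I anticipate this constant arises from numerically optimizing a one-dimensional generating-function expression such as $\max_t \big(\text{geometric mean of } r(k)^{-2}\text{-weighted contributions}\big)$, and the proof will reduce to verifying that this optimization yields $1.0628^d$ uniformly for $\varepsilon<\varepsilon_0$. Establishing the requisite large-sieve or Turán inequality in the correct normalization, and then carrying out the optimization of $A$, is where I expect the real work to lie.
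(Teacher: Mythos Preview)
Your approach to the first two bounds is essentially the paper's: rewrite $(L_{2,N}^{\rm per})^2$ via the kernel $\prod_j\bigl(\tfrac{1}{3}+B_2(|x_{n,j}-x_{m,j}|)\bigr)$, keep the diagonal terms $n=m$ (each worth $(1/2)^d$), and discard the off-diagonal terms. Two small corrections. First, the off-diagonal terms can be dropped not because the kernel is ``of positive type'' (positive-definiteness only gives $\sum_{n,m}\ge 0$, not $\sum_{n\ne m}\ge 0$) but because the kernel is \emph{pointwise} nonnegative: $B_2(t)\ge -1/12$ on $[0,1]$, so each factor is $\ge 1/4$. Second, for $N_2^{\rm per,w+}$ you should not assume $\sum_h w_h=1$; the definition imposes no such constraint. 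The paper keeps $S=\sum_h w_h$ free, uses $\sum_h w_h^2\ge S^2/N$ by Cauchy--Schwarz, and then minimizes the resulting quadratic in $S$; this is what produces the factor $(1-\varepsilon^2)$ instead of $1/(1+\varepsilon^2)$.

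Your plan for the arbitrary-weight bound has a genuine gap: you propose to prove the $1.0628^d$ lower bound from scratch via a Tur\'an/large-sieve argument on a cleverly chosen frequency set $A$, but this is both vague and unnecessary. The paper's argument is a one-line reduction. By Proposition~\ref{pr1}, $L_{2,N}^{\rm per}(\cP,\bsw)\ge \min_{\bsdelta} L_{2,N}(\cP+\bsdelta,\bsw)$, and since the initial discrepancy is $3^{-d/2}$ in both the periodic and non-periodic settings, one obtains $N_2^{\rm per,w}(\varepsilon,d)\ge N_2^{\rm w}(\varepsilon,d)$, where the right-hand side is the inverse of the ordinary (non-periodic) weighted $L_2$-discrepancy. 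The $1.0628^d$ bound is then simply the known non-periodic result of Novak and Wo\'zniakowski. You missed this reduction entirely; the direct route you sketch would amount to reproving their theorem, whose proof is an eigenvalue argument quite different from the frequency-selection scheme you describe.
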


\begin{proof}
The lower bound for $N_2^{{\rm per,w}}(\varepsilon,d) $ follows directly from the corresponding lower bound for the inverse of the non-periodic $L_2$-discrepancy, see \cite{NW01} or (9.17) in \cite{NW10}, together with Proposition \ref{pr1}.
Similarly, a lower bound 
$$N_2^{{\rm per}}(\varepsilon,d) \ge  N_2^{{\rm per,w+}}(\varepsilon,d) 
\ge (1-\varepsilon^2) \, 1.125^d$$
follows from the corresponding lower bound for the inverse of the non-periodic $L_2$-discrepancy, see \cite{SW98} or (9.16) in \cite{NW10}, together with Proposition \ref{pr1}.

To prove the better lower bound for $N_2^{{\rm per}}(\varepsilon,d)$ we observe that
according to \cite[p.~389-390]{HOe} we have $$L_{2,N}^{{\rm per}}(\cP)^2=-\frac{1}{3^d}+\frac{1}{N^2} \sum_{n,m=1}^N \prod_{j=1}^d \left(\frac{1}{3}+B_2(|x_{n,j}-x_{m,j}|)\right),$$ where $B_2(x)$ is the second Bernoulli polynomial, $B_2(x)=x^2-x+\frac{1}{6}$ and $x_{n,j}$ is the $j^{{\rm th}}$ coordinate of the point $\bsx_n$. Note that $B_2(0)=1/6$ and for $x\in [0,1]$ we have $B_2(x) \ge B_2(1/2)=-1/12$. Now we have
\begin{eqnarray*}
L_{2,N}^{{\rm per}}(\cP)^2 \ge  -\frac{1}{3^d}+\frac{1}{N^2} \sum_{n,m=1\atop n=m}^N \frac{1}{2^d} + \frac{1}{N^2} \sum_{n,m=1\atop n\not=m}^N  \left(\frac{1}{3}-\frac{1}{12}\right)^d\ge -\frac{1}{3^d} + \frac{1}{N} \frac{1}{2^d}.
\end{eqnarray*}
Hence, $$L_{2,N}^{{\rm per}}(\cP) \le \frac{\varepsilon}{3^{d/2}}$$ implies $$-\frac{1}{3^d} + \frac{1}{N} \frac{1}{2^d} \le\frac{\varepsilon^2}{3^d}$$ and therefore $$N \ge \frac{1}{1+\varepsilon^2} \left(\frac{3}{2}\right)^d.$$

The slightly worse lower bound for $N_2^{{\rm per,w+}}(\varepsilon,d)$ follows similarly using
$$L_{2,N}^{{\rm per}}(\cP,\bsw)^2=\frac{1}{3^d} - \frac{2}{3^d} \sum_{n=1}^N w_n
+ \sum_{n,m=1}^N w_n w_m \prod_{j=1}^d \left(\frac{1}{3}+B_2(|x_{n,j}-x_{m,j}|)\right).$$ 
If the weights $w_n$ are nonnegative, estimating the double sum by the diagonal terms, we have
$$
L_{2,N}^{{\rm per}}(\cP,\bsw)^2 \ge  \frac{1}{3^d} - \frac{2}{3^d} \sum_{n=1}^N w_n  +  \frac{1}{2^d} \sum_{n=1}^N w_n^2 \ge \frac{1}{3^d} - \frac{2^d N}{3^{2d}}.
$$
Now it is easily seen that
$$L_{2,N}^{{\rm per}}(\cP,\bsw) \le \frac{\varepsilon}{3^{d/2}}$$ 
implies 
$$N \ge (1-\varepsilon^2) \left(\frac{3}{2}\right)^d.$$
\end{proof}

\begin{remark}\rm
Let $N$ be the smallest prime number larger or equal to $$\left\lceil \left(\frac{3}{2}\right)^d \frac{d^2}{\varepsilon^2}\right\rceil=:M.$$ Then it follows from Theorem~\ref{thm1} that for Korobov's $p$-set $\cP_p^{{\rm Kor}}$ with $p=N$ we have $$L_{2,N}^{{\rm per}}(\cP_p^{{\rm Kor}}) \le \frac{\varepsilon}{3^{d/2}}$$ and hence   
$$N_2^{{\rm per,w+}}(\varepsilon,d) \le N < 2 M = 2   \left\lceil \left(\frac{3}{2}\right)^d \frac{d^2}{\varepsilon^2}\right\rceil,$$ where we used Bertrand's postulate, which tells us that $M \le N < 2 M$.

This means, that the term $3/2$ in the lower bounds in Theorem~\ref{thm:curse} is the exact basis for the exponential dependence of the information complexity in $d$.
\end{remark}

\noindent\textbf{Authors' addresses:}\\

\noindent {\sc Josef Dick}

\noindent School of Mathematics and Statistics, The University of New South Wales, Sydney NSW 2052, Australia, 
email: josef.dick(AT)unsw.edu.au\\

\noindent {\sc Aicke Hinrichs}

\noindent Institut f\"ur Analysis, Johannes Kepler Universit\"at Linz, Altenbergerstra{\ss}e 69, 4040 Linz, Austria,
email: aicke.hinrichs(AT)jku.at\\

\noindent {\sc Friedrich Pillichshammer}

\noindent Institut f\"ur Finanzmathematik und Angewandte Zahlentheorie, Johannes Kepler Universit\"at Linz, Altenbergerstra{\ss}e 69, 4040 Linz, Austria,
email: friedrich.pillichshammer(AT)jku.at


\begin{thebibliography}{99.}%

\bibitem{dick2014} J. Dick: Numerical integration of H\"older continuous absolutely convergent Fourier-, Fourier cosine-, and Walsh series. J. Approx. Theory 184: 111--145, 2014.

\bibitem{DP15} J. Dick and F. Pillichshammer: The weighted star discrepancy of Korobov's $p$-sets. Proc. Amer. Math. Soc. 143: 5043--5057, 2015.

\bibitem{hin} A. Hinrichs: Discrepancy, integration and tractability. In: {\it Monte Carlo and Quasi-Monte Carlo Methods 2012}, pp. 123-163, Springer Proc. Math. Stat., 65, Springer, Berline, 2016.  

\bibitem{HOe} A. Hinrichs and J. Oettershagen: Optimal point sets for quasi-Monte Carlo integration of bivariate periodic functions with bounded mixed derivatives. In: {\it Monte Carlo and Quasi-Monte Carlo Methods 2014}, pp. 385-405, Springer Proc. Math. Stat., 163, Springer, Cham, 2013. 

\bibitem{HW} A. Hinrichs and H. Weyhausen: Asymptotic behavior of average $L_p$-discrepancies. J. Complexity 28: 425--439, 2012.

\bibitem{HuWa} L.K. Hua and Y. Wang: {\it Applications of Number Theory to Numerical Analysis.} Springer, Berlin, 1981. 

\bibitem{kor1957} N.M. Korobov: Approximate calculation of repeated integrals by number-theoretical methods. (Russian) Dokl. Akad. Nauk SSSR (N.S.) 115: 1062--1065, 1957.

\bibitem{kor} N.M. Korobov: On number-theoretic methods in approximate analysis (Russian). Probl. Numer. Math. Comp. Techn., Gosudarstv. Nau\v{c}no-Tehn. Izdat. Ma\v{s}inostr. Lit.,
Moscow, 1963, pp. 36--44.

\bibitem{Lev}  V.F. Lev: On two versions of $L^2$-discrepancy and geometrical interpretation of diaphony. Acta Math. Hungar. 69(4): 281--300, 1995.

\bibitem{lini} R. Lidl and H. Niederreiter: \textit{Finite fields.} Encyclopedia of Mathematics and its Applications, 20. Addison-Wesley Publishing Company, Advanced Book Program, Reading, MA, 1983; 2nd ed.,  Cambridge University Press, 1997.

\bibitem{NW01} E. Novak and H. Wo\'zniakowski:
Intractability results for integration and discrepancy, 
J. Complexity 17: 388--441, 2001.

\bibitem{NW10} E. Novak and H. Wo\'zniakowski: \textit{Tractability of Multivariate Problems, Volume II: Standard Information for Functionals.} European Mathematical Society, Z\"urich, 2010.

\bibitem{SW98}
I. H. Sloan and H. Wo\'zniakowski: When are quasi-Monte Carlo algorithms efficient for
high dimensional integrals?, J. Complexity 14, 1--33, 1998. 

\bibitem{weil} A. Weil: On some exponential sums. Proc. Nat. Acad. Sci. U.S.A. 34: 204--207, 1948.

\bibitem{zint} P. Zinterhof: \"Uber einige Absch\"atzungen bei derApproximation von Funktionen mit Gleichverteilungsmethoden (German). \"Osterr. Akad. Wiss. Math.-Naturwiss. Kl. S.-B. II 185: 121--132, 1976. 
\end{thebibliography}
\end{document}